 \newtheorem{thm}{Theorem}[section]
 \newtheorem{cor}[thm]{Corollary}
 \newtheorem{prop}[thm]{Proposition}
 \newtheorem{rem}[thm]{Remark}
 \numberwithin{equation}{section}
 \newenvironment{proof}{\medbreak\noindent{\it Proof:}\rm}{\hfill$\square$\rm}
\newcommand{\A}{{\mathcal  A}}
\newcommand{\C}{{\mathbb  C}}
\newcommand{\Cn}{{\mathbb  C}^n}
\newcommand{\Csn}{{\mathbb  C}_*^n}
\newcommand{\D}{{\mathcal  D}}
\newcommand{\R}{{\mathbb  R}}
\newcommand{\Rn}{{\mathbb  R}^n}
\newcommand{\M}{{\mathcal  M}}
\newcommand{\T}{{\mathcal  T}}
\newcommand{\AP}{{\operatorname{AP}}}
\newcommand{\HAP}{{\operatorname{HAP}}}
\newcommand{\Log}{{\operatorname{Log}\,}}
\newcommand{\codim}{{\operatorname{codim}\,}}
\newcommand{\Imm}{{\operatorname{Im}\,}}
\newcommand{\EE}{{\operatorname{E}\,}}
\newcommand{\supp}{{\rm supp}\,}
\author{Alexander Rashkovskii\thanks{Support by the Institut Mittag-Leffler (Djursholm, Sweden) is gratefully
acknowledged.}}
\date{}
\title{A remark on amoebas in higher codimensions}
\begin{document}

\maketitle

\begin{abstract}
It is shown that tube sets over amoebas of algebraic varieties of dimension $q$ in $\Csn$ (and, more generally, of almost periodic
holomorphic chains in $\Cn$) are $q$-pseudoconcave
in the sense of Rothstein. This is a direct consequence of a
representation of such sets as supports of positive closed currents.
\end{abstract}

\section{Introduction}
Let $V$ be an algebraic variety in $\Csn=(\C\setminus 0)^n$. Its
image $\A_V=\Log V$ under the mapping
$\Log(z_1,\ldots,z_n)=(\log|z_1|,\ldots,\log|z_n|)$ is called the
{\it amoeba} of $A$. The notion was introduced in \cite{GKZ} and has
found numerous applications in complex analysis and algebraic
geometry, see the survey \cite{Mi}.

The amoeba of $V$ is a closed set with non-empty complement
$\A_V^c=\Rn\setminus\A_V$. If $V$ is of codimension $1$, then each
component of $\A_V^c$ is convex because $\Log^{-1}(\A_V^c)$ is the 
intersection of a family of domains of holomorphy. This is no longer
true for varieties of higher codimension; nevertheless, some
rudiments of convexity do take place. As shown by Henriques
\cite{H}, if $\codim V=k$, then $\A_V^c$ is $(k-1)$-convex, a notion
defined in terms of homology groups for sections by $k$-dimensional
affine subspaces. A local result, due to Mikhalkin \cite{Mi}, states
that $\A_V$ has no {\it supporting $k$-cap}, i.e., a ball $B$ in a
$k$-dimensional plane such that $\A_V\cap B$ is nonempty and
compact, while $\A_V\cap (B+\epsilon v)=\emptyset$ for some
$v\in\Rn$ and all sufficiently small $\epsilon>0$.

The notion of amoeba was adapted by Favorov \cite{F} to zero sets of
holomorphic almost periodic functions in a tube domain as "shadows"
cast by the zero sets to the base of the domain; a precise
definition is given in Section~4. In \cite{FGS}, Henriques' result
was extended to amoebas of zero sets of so-called regular
holomorphic almost periodic mappings. This was done by a reduction
to the case considered in \cite{H} where the proof was given by
methods of algebraic geometry.

In this note, we propose a different approach to convexity
properties of amoebas in higher codimensions. It is purely
analytical and works equally well for both algebraic and almost
periodic situations. Moreover, we get our (pseudo)convexity results
as a by-product of a representation of an amoeba as the support of a
certain natural measure determined by the "density" of the zero set.

Let us start with a hypersurface case. When
$V=\{P(z)=0\}\subset\Csn$ is defined by a Laurent polynomial $P$,
the function
$$N_P(y)=\frac1{(2\pi)^n}\int_{[-\pi,\pi]^n}\log|P(e^{y_1+i\theta_1},\ldots,
e^{y_n+i\theta_n})|\,d\theta,$$ known in tropical mathematics
community as the {\it Ronkin function}, is convex in $\Rn$ and linear
precisely on each connected component of $\A_V^c$. This means that
the support of the current $dd^c N_P(\Imm z)$ equals
$T_{\A_V}=\Rn+i\A_V$, the tube set in $\Cn$ with base $\A_V$. Since
the complement to the support of a positive closed current of
bidegree $(1,1)$ is pseudoconvex (being a domain of existence for a
pluriharmonic function), this implies pseudoconvexity of
$T_{\A_V^c}$ and thus convexity of every component of $\A_V^c$. Of
course, the function $N_P$ gives much more than simply generating
the amoeba (see, for example, \cite{R3}, \cite{FPS}, \cite{PR}).

The same reasoning applies to amoebas of holomorphic almost periodic functions $f$ with Ronkin's function $N_P$ replaced by the mean value $\M_f(y)$ of $\log|f|$ over the real planes $\{x+iy:\: x\in\Rn\}$, $y\in\Rn$.

What we will do in the case of codimension $k>1$, is presenting
$T_{\A_V}$ as the support of a closed positive current of bidegree
$(k,k)$ (namely, a mean value current for the variety or, more
generally, for a holomorphic chain) and then using a theorem on
$(n-k)$-pseudoconcavity, in the sense of Rothstein, of supports of
such currents due to Fornaess and Sibony \cite{FS}. In addition, we
show that for a closed set $\Gamma\subset\Rn$, Rothstein's
$(n-k)$-pseudoconcavity of $T_{\Gamma}$ implies the absence of
$k$-supporting caps of the set $\Gamma$ (Proposition~\ref{prop:1}).

We obtain our main result, Theorem~\ref{theo:amch}, for arbitrary
almost periodic holomorphic chains, which is a larger class than
zero sets of regular almost periodic holomorphic mappings, and the
situation with algebraic varieties (Corollary~\ref{cor:1}) is a
direct consequence. The existence of the mean value currents was established in
\cite{FRR2}, so here we just combine it together with the theorem on
supports of positive closed currents. In this sense, this note is
just a simple illustration of how useful the mean value currents
are.

\section{Rothstein's $q$-pseudoconvexity}

We will use the following notion of $q$-pseudoconvexity, due to
W.~Rothstein \cite{Ro}, see also \cite{Ri}. Given $0<q<n$ and
$\alpha,\beta\in (0,1)$, the set
$$H=\{(z,w)\in\C^{n-q}\times\C^{q}:\: \|z\|_\infty<1,
\ \|w\|_\infty<\alpha\ {\rm or\ }
 \beta<\|z\|_\infty<1,\ \|w\|_\infty<1\}$$ is called an
$(n-q,q)$-Hartogs figure; here $\|z\|_\infty=\max_j|z_j|$. Note that
its convex hull $\hat H$ is the unit polydisc in $\C^n$. An open
subset $\Omega$ of a complex $n$-dimensional manifold $M$ is said to
be {\it $q$-pseudoconvex} in $M$ if for any $(n-q,q)$-Hartogs figure
$H$ and a biholomorphic map $\Phi:\hat H\to M$, the condition
$\Phi(H)\subset\Omega$ implies $\Phi(\hat H)\subset\Omega$. If this
is the case, we will also say that $M\setminus\Omega$ is {\it
$q$-pseudoconcave} in $M$.

Loosely speaking, the $q$-pseudoconvexity is the Kontinuit\"atssatz
with respect to $(n-q)$-polydiscs; usual pseudoconvexity is
equivalent to $(n-1)$-pseudoconvexity.

\begin{thm}\label{theo:FS} {\rm (\cite{FS}, Cor. 2.6)} The support
of a positive closed current of bidimension $(q,q)$ on a complex
manifold $M$ is $q$-pseudoconcave in $M$.
\end{thm}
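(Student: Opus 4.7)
Unpacking the definition of $q$-pseudoconcavity, I must show: whenever $\Phi\colon\hat H\to M$ is a biholomorphism from the unit polydisc with $\Phi(H)\cap\supp T=\emptyset$, also $\Phi(\hat H)\cap\supp T=\emptyset$. Since pulling $T$ back by $\Phi$ preserves positivity, closedness, and bidimension, it suffices to assume $T$ is a positive closed current of bidimension $(q,q)$ on $\hat H\subset \C^{n-q}_z\times\C^{q}_w$ with $T\equiv 0$ on $H$, and to deduce $T\equiv 0$ on $\hat H$. Inspecting the two slabs comprising $H$, the hypothesis forces $\supp T\subset \{\|z\|_\infty\leq\beta\}\times\{\alpha\leq\|w\|_\infty<1\}$, so the projection $\pi_w\colon\hat H\to\Delta^q_w$ is proper on $\supp T$.

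The next step is to push $T$ forward along $\pi_w$. Because $T$ has bidimension $(q,q)$ and $\Delta^q_w$ has complex dimension $q$, the pushforward $\pi_{w\ast}T$ is of bidegree $(0,0)$, i.e.\ a distribution on the polydisc; being positive and $d$-closed it must be a non-negative constant on the connected polydisc. Every fiber of $\pi_w$ over $\{\|w\|_\infty<\alpha\}$ lies entirely inside the first slab of $H$, so the slices of $T$ vanish there and that constant is zero. Translated back, this reads $T\wedge(i\,dw_1\wedge d\bar w_1)\wedge\cdots\wedge(i\,dw_q\wedge d\bar w_q)=0$ as a measure on $\hat H$.

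Running exactly the same scheme with a family of perturbed linear projections $\pi_{A,B}(z,w)=Aw+Bz$, where $A\in GL(q,\C)$ and $B\in\mathrm{Hom}(\C^{n-q},\C^q)$ satisfies $\|B\|\,\|A^{-1}\|\beta$ small relative to $\alpha$ (which guarantees both that $\pi_{A,B}$ is proper on $\supp T$ and that $\pi_{A,B}^{-1}(\{\|\cdot\|_\infty<\alpha'\})\cap\hat H\subset H$ for an appropriate $\alpha'>0$), yields $T\wedge(i\eta_1\wedge\bar\eta_1)\wedge\cdots\wedge(i\eta_q\wedge\bar\eta_q)=0$, where $\eta_j=\sum_k A_{jk}\,dw_k+\sum_i B_{ji}\,dz_i$ are the rows of $[B\,|\,A]$. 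Since this identity is polynomial in the coefficients of $\eta_j$ and $\bar\eta_j$, vanishing on an open set of configurations extends to vanishing for all $(1,0)$-forms $\eta_1,\ldots,\eta_q$ on $\C^n$. The main obstacle is then extracting $T\equiv 0$ from these simultaneous trace vanishings; but the strongly positive simple forms $(i\eta_1\wedge\bar\eta_1)\wedge\cdots\wedge(i\eta_q\wedge\bar\eta_q)$ span the full space of $(q,q)$-forms on $\C^n$ as a complex vector space (by polarization applied to wedge powers of $(1,0)$-covectors), so $T$ annihilates every smooth compactly supported $(q,q)$-test form on $\hat H$ and therefore vanishes as a current.
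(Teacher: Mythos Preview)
The paper does not prove Theorem~\ref{theo:FS}; it is quoted without proof from Forn{\ae}ss--Sibony \cite{FS} (their Corollary~2.6), so there is no in-paper argument to compare your proposal against.

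On its own merits your plan is a correct direct proof. The reduction to a current $T$ on $\hat H$ via pullback by the biholomorphism $\Phi$, the properness of $\pi_w$ on $\supp T\subset\{\|z\|_\infty\le\beta\}\times\{\alpha\le\|w\|_\infty<1\}$, the identification of $(\pi_w)_*T$ as a nonnegative closed $(0,0)$-current (hence a constant) forced to vanish over $\{\|w\|_\infty<\alpha\}$, the consequence $T\wedge\bigwedge_j(i\,dw_j\wedge d\bar w_j)=0$, and the final polarization/spanning step are all sound. The one place that deserves a little more care is the perturbed-projection step: you should name the connected open set $V\subset\C^q$ on which $(\pi_{A,B})_*T$ is defined, and verify both that $\pi_{A,B}\colon\supp T\to V$ is proper and that the pushforward is closed on $V$ (the latter uses that $\supp T$ sits inside $\{\|z\|_\infty\le\beta\}$, so a cutoff in $z$ equal to $1$ there justifies the integration by parts giving $d(\pi_{A,B})_*T=(\pi_{A,B})_*dT=0$). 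With $A$ close to the identity and $\|B\|$ small these are routine checks. This push-forward-then-vary-the-projection scheme is in fact the standard route to the result and is close in spirit to how \cite{FS} proceed.
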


It is easy to see that for tube sets, $(n-k)$-pseudoconcavity
implies absence of $k$-caps in the sense of Mikhalkin.

\begin{prop}\label{prop:1} Let $\Gamma$ be a closed subset of
a convex open set $D\subset\Rn$. If the tube set
$T_\Gamma=\Rn+i\Gamma$ is $(n-k)$-pseudoconcave in the tube domain
$T_D=\Rn+iD$, then $\Gamma$ has no $k$-supporting caps.
\end{prop}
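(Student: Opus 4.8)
The plan is to argue by contraposition. Assuming that $\Gamma$ has a $k$-supporting cap, I want to exhibit a $(k,n-k)$-Hartogs figure $H$ and a biholomorphic map $\Phi$ witnessing that $\Omega:=T_D\setminus T_\Gamma=T_{D\setminus\Gamma}$ is \emph{not} $(n-k)$-pseudoconvex; since $T_\Gamma$ being $(n-k)$-pseudoconcave in $T_D$ means exactly that $\Omega$ is $(n-k)$-pseudoconvex, this contradicts the hypothesis and proves the proposition.

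First I would normalize the cap. Write $\Rn=\R^k\times\R^{n-k}$ with coordinates $(y',y'')$, the supporting plane $P=\{y''=0\}$, and $B=\{(y',0):|y'-a'|<R\}$. Two elementary observations make the cap usable. Since $K:=\Gamma\cap B$ is nonempty and compact, it lies in the interior of $B$, so $\Gamma$ is absent from a neighbourhood of $\partial B$, and for a slightly smaller ball $B'$ with $K\subset B'\Subset B$ the compact sets $\overline{B'}+\epsilon v$ ($\epsilon\in[\epsilon_1,\epsilon_0]\subset(0,\epsilon_0]$) stay off the closed set $\Gamma$. Moreover the push direction $v$ cannot be tangent to $P$: if it were, then for any $p_0\in K$ one would have $p_0\in B+\epsilon v$ for small $\epsilon$, so $p_0\in\Gamma\cap(B+\epsilon v)$, contradicting the cap property. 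Thus $v$ is transverse to $P$.

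The heart of the construction is to complexify $P$ and the push, exploiting the real‑translation invariance of the tube. I would let the $k$ polydisc variables $z$ carry the plane $P$ (real parts as the free tube directions, imaginary parts as the position in $B$) and let the $n-k$ parameter variables $w$ carry the transverse space together with a displacement in the direction $v$. The goal is to design $\Phi$ so that on the shell $\{\beta<\|z\|_\infty<1\}$ the image is steered into a neighbourhood of $\partial B$, while on the thin part $\{\|w\|_\infty<\alpha\}$ it is displaced into $B+\epsilon v$ with $\epsilon\in(0,\epsilon_0]$. By the two observations above, $\Phi(H)$ then avoids $T_\Gamma$, i.e. $\Phi(H)\subset\Omega$, whereas the centre of one of the filled core polydiscs is sent to a point of $\Rn+iK\subset T_\Gamma$, so that $\Phi(\hat H)\not\subset\Omega$; this is the sought failure of $(n-k)$-pseudoconvexity.

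The hard part will be making the shell genuinely miss $T_\Gamma$. One cannot simply push the whole image off $\Gamma$ along $v$: the $v$-displacement is the imaginary part of a holomorphic function, hence pluriharmonic, and by the maximum/mean-value principle it cannot be strictly positive on an entire shell surrounding a core point where it vanishes. The displacement can therefore be exploited only on the thin $w$-part, and on the rest of the shell one is forced to drive the $z$-image out to $\partial B$, where $\Gamma$ is absent because $K\Subset B$. Reconciling this with the round shape of $B$ against the polydisc (box) geometry of $\hat H$, and doing so simultaneously with the transverse push, is the delicate point; I expect to handle it by localizing near a single $p_0\in K$ and by choosing the Hartogs parameters $\alpha,\beta$ and the complex directions so that on the shell at least one coordinate is forced out to $\partial B$.
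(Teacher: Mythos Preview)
Your overall strategy---argue by contraposition, normalize the cap, and exhibit a $(k,n-k)$-Hartogs figure together with a biholomorphism $\Phi$ so that $\Phi(H)\subset T_{D\setminus\Gamma}$ while $\Phi(\hat H)$ meets $T_\Gamma$---is exactly the paper's. The difference is in execution: the paper's $\Phi$ is merely a \emph{translation}, whereas you are planning an elaborate biholomorphism that you never actually write down.

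Concretely, the paper first fixes real affine coordinates so that $B=\{\|y'\|_\infty<1,\ y''=0\}$, $v=(0,v'')$ with $\|v''\|_\infty=1$, $B+\epsilon v\subset D\setminus\Gamma$ for all $\epsilon\in(0,1)$, and the frame $\{\beta<\|y'\|_\infty<1,\ \|y''\|_\infty<1\}$ lies in $D\setminus\Gamma$ for a suitable $\beta\in(0,1)$; then, since $B+\tfrac12 v$ misses the closed set $\Gamma$, one also has $\{\|y'\|_\infty<1,\ \|y''-\tfrac12 v''\|_\infty<\alpha\}\subset D\setminus\Gamma$ for some small $\alpha$. With these very parameters $\alpha,\beta$, the paper simply takes the $\tfrac{i}{2}v''$-shift of the standard $(k,n-k)$-Hartogs figure $H$: the thin piece lands, in imaginary part, inside the $\alpha$-tube about $B+\tfrac12 v$; the shell is meant to be absorbed by the prepared frame region; and the shifted full polydisc $\hat H+\tfrac{i}{2}v''$ contains $iB$ and hence meets $T_\Gamma$. (Non-orthogonal $v$ is reduced to this case by a real linear map.) All the effort is in the \emph{real} normalization step; the pluriharmonic obstruction you worry about never arises, because the $v$-displacement is a \emph{constant} imaginary shift, not a holomorphically varying one.

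Where your proposal has a genuine gap is exactly the step you yourself label ``the hard part''. You intend, on the shell $\{\beta<\|z\|_\infty<1\}$, to ``drive the $z$-image out to $\partial B$'', but you never construct a map achieving this, and with any first block of the form $z'\mapsto z'$ the condition $\beta<\|z'\|_\infty$ does \emph{not} force $\|\mathrm{Im}\,z'\|_\infty$ to be large (take $z'$ real). Your closing sentence (``I expect to handle it by localizing near a single $p_0\in K$ \ldots'') is a hope, not an argument. The paper's lesson is that the remedy is not a more ingenious holomorphic $\Phi$ but a careful preliminary choice of the real coordinates and of the Hartogs parameters $\alpha,\beta$, after which a plain shift is declared to suffice; you should invest your effort there rather than in designing a curved $\Phi$.
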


\begin{proof} Assume $\Gamma$ has a $k$-supporting cap $B$. We
assume that the vector $v$ in the definition of the cap is
orthogonal to $B$ (in the general case, one gets an image of a
Hartogs figure under a non-degenerate linear transform). Choose
coordinates in $\Rn$ such that
$$B=\{(y',y'')\in \R^k\times\R^{n-k}:\: \|y'\|_\infty<1,\ y''=0\},$$
$$\{\beta<\|y'\|_\infty <1, \ \|y''\|_\infty<1\}\subset
D\setminus\Gamma,\quad \beta\in (0,1),$$ and $B+\epsilon v\subset
D\setminus \Gamma$ for all $\epsilon\in (0,1)$, where $v=(0,v'')$,
$\|v''\|_\infty=1$. Since $D$ is open, $\{y:\: \|y'\|_\infty <1, \
\|y''-\frac12 v''\|_\infty<\alpha\}\subset D\setminus\Gamma$ for
some $\alpha\in (0,1)$. Therefore, the $\frac{i}2v''$-shift of the
corresponding $(k,n-k)$-Hartogs figure $H$ is a subset of the tube
set $T_D\setminus T_\Gamma$. Since $B$ is a subset of the shifted
polydisc $\hat H+\frac{i}2v''$ and $B\cap\Gamma\neq\emptyset$, the
set $T_\Gamma$ is not $(n-k)$-pseudoconcave.
\end{proof}

\section{Almost periodic holomorphic chains}

Here we recall some facts from Ronkin's theory of holomorphic almost
periodic mappings and currents; for details, see \cite{R2},
\cite{R5}, \cite{FRR2}, \cite{FRR3}, and the survey \cite{FR}.

Let $\T_t$ denote the translation operator on $\R^n$ by $t\in\R^n$,
then for any function $f$ on $\R^n$, $({\T}_t^*f)(x)=f(\T_t
x)=f(x+t)$.

A continuous mapping $f$ from $\R^n$ to a metric space $X$ is called
{\it almost periodic} if the set $\{{\T}_t^*f\}_{t\in\R^n}$ is
relatively compact in $C(\R^n,X)$ with respect to the topology of
uniform convergence on $\R^n$. The collection of all almost periodic
mappings from $\R^n$ to $X$ will be denoted by $\AP(\R^n,X)$.

As is known from classical theory of almost periodic functions, any
function $f\in \AP(\R^n,\C)$ has its mean value $\M_f$ over $\R^n$,
$$\M_f=\lim_{s\to\infty}(2s)^{-n}\int_{\Pi_s} f\,dm_n,$$
where $\Pi_s=\{x\in\R^n:\: \|x\|_\infty<s\}$ and $m_n$ is the
Lebesgue measure in $\R^n$.

Let $D$ be a convex domain in $\R^n$, $T_D=\R^n+iD$. A continuous
mapping $f: T_D\to X$ is called {\it almost periodic on $T_D$} if
$\{{\T}_t^*f\}_{t\in\R^n}$ is a relatively compact subset of
$C(T_D,X)$ with respect to the topology of uniform convergence on
each tube subdomain $T_{D'},\,D'\Subset D$. The collection of all
almost periodic mappings from $T_D$ to $X$ will be denoted by
$\AP(T_D,X)$.

The set $\AP(T_D,\C)$ can be defined equivalently as the closure
(with respect to the topology of uniform convergence on each tube
subdomain $T_{D'},\,D'\Subset D$) of the collection of all
exponential sums with complex coefficients and pure imaginary
exponents (frequencies). The mean value of $f\in \AP(T_D,\C)$ is a
continuous function of $\Imm z$. The collection of all holomorphic
mappings $f\in \AP(T_D,\C^k)$ will be denoted by $\HAP(T_D,\C^k)$.
In particular, any mapping from $\C^n$ to $\C^k$, whose components
are exponential sums with pure imaginary frequencies, belongs to
$\HAP(\C^n,\C^k)$.

The notion of almost periodicity can be extended to distributions.
For example, a measure $\mu$ on $T_D$ is called almost periodic if
$\phi(t)=\int ({\T}_t)_*\phi\,d\mu\in \AP(\Rn,\C)$ for every
continuous function $\phi$ with compact support in $T_D$.
Furthermore, it can be extended to holomorphic chains as follows.

Let $Z=\sum_j c_jV_j$ be a holomorphic chain on $\Omega\subset\C^n$
supported by an analytic variety $|Z|=\cup_jV_j$ of pure dimension
$q$. Its integration current $[Z]$ acts on test forms $\phi$ of
bidegree $(q,q)$ with compact support in $\Omega$ (shortly,
$\phi\in\D_{q,q}(\Omega)$) as
$$ ([Z],\phi)=\int_{Reg|Z|}\gamma_Z\phi=\sum_j c_j\int_{Reg\,V_j}\phi,$$
where the function $\gamma_Z$ takes constant positive integer values
on the connected components of $Reg|Z|$. The $q$-dimensional volume
of $Z$ in a Borel set $\Omega_0\subset \Omega$ is
$$Vol_Z(\Omega_0)=\int_{\Omega_0\cap Reg|Z|}\gamma_Z\beta_q$$
(the mass of the trace measure of $[Z]$ in $\Omega_0$). If $f$ is a
holomorphic mapping on $\Omega$ such that $|Z|=f^{-1}(0)$ and
$\gamma_Z(z)$ equals the multiplicity of $f$ at $z$, the chain will
be denoted by $Z_f$.

A $q$-dimensional holomorphic chain $Z$ on $T_D$ is called an {\it
almost periodic holomorphic chain} if $(\T_{t}^*[Z],\phi)\in
AP(T_D,\C)$ for any test form $\phi\in\D_{q,q}(T_D)$. Here
$\T_{t}^*S=\sum\alpha_{IJ}(z+t)\,dz^I\wedge d\bar z^J$ is the
pullback of the current $S=\sum\alpha_{IJ}(z)\,dz_I\wedge d\bar
z_J$.

For any $f\in \HAP(T_D,\C)$, the chain (divisor) $Z_f$ is always
almost periodic; on the other hand, there exist almost periodic
divisors (starting already from dimension $n=1$) that are not
divisors of any holomorphic almost periodic function; when $n>1$,
even a periodic divisor need not be the divisor of a periodic
holomorphic function \cite{R4}. The situation with higher
dimensional mappings is even worse, since the chain $Z_f$ generated
by $f\in \HAP(T_D,\C^k))$, $k>1$, need not be almost periodic
\cite{FRR2}. It is however so if the mapping $f$ is {\it regular},
that is, if $\codim |Z_g|=k$ or $|Z_g|=\emptyset$ for every mapping
$g$ from the closure of the set $\{{\T}_t^*f\}_{t\in\R^m}$
\cite{FRR2}, \cite{FRR3}. A sufficient regularity condition
\cite{R2} shows that such mappings are generic.

Now we can turn to construction of the current that plays central
role in our considerations, the details can be found in \cite{FRR3}.
Let $Z$ be an almost periodic holomorphic chain of dimension $q$.
For any test form $\phi\in\D_{q,q}(T_D)$, the mean value
$\M_{\phi_Z}$ of the function $\phi_Z(t):=(\T_{t}^*[Z],\phi)\in
\AP(\R^n,\C)$ defines the {\it mean value current} $\M_Z$ of $Z$ by
the relation
$$(\M_Z,\phi)=\M_{\phi_Z}.$$ The current is closed and positive.
Since $\M_Z$ is translation invariant with respect to $x$, its
coefficients have the form $\M_{IJ}=m_n\otimes \M_{IJ}'$, where
$\M_{IJ}'$ are Borel measures in $D$. In addition, if $\psi=\sum
\psi_{IJ}dz_I\wedge d\bar z_J$ is a form with coefficients
$\psi_{IJ}\in\D(D)$ and $\chi_s$ is the characteristic function of
the cube $\Pi_s$, then there exists the limit
$$ \lim_{s\to\infty}(2s)^{-n}([Z],\chi_s\psi)=(\M_Z',\psi'),$$ where
$\M_Z'=\sum\M_{IJ}'dy_I\wedge dy_J$ and
$\psi'=\sum\psi_{IJ}dy_I\wedge dy_J$.

The trace measure $\mu_Z=\M_Z\wedge\beta_q$ can also be written as
$\mu_Z=m_n\otimes \mu_Z'$, where $\mu_Z'$ is a positive Borel
measure on $D$. The following result shows that it can be viewed as
a density of the chain $Z$ along $\Rn$.

\begin{thm}\label{theo:support} {\rm (\cite{FRR2}, \cite{FRR3})} Let $Z$
be an almost periodic holomorphic chain in a tube domain $T_D$. For
any open set $G\Subset D$ such that $\mu_Z'(\partial G)=0$, one has
$$ \lim_{s\to\infty}(2s)^{-n}Vol_Z(\Pi_s+iG)=\mu_Z'(G);$$
in addition, $\mu_Z'(G)=0$ if and only if $|Z|\cap T_{G}=\emptyset$.
\end{thm}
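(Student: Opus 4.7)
The plan is to derive both claims directly from the limit formula
$$\lim_{s\to\infty}(2s)^{-n}([Z],\chi_s\psi)=(\M_Z',\psi')$$
displayed in the excerpt, specialized to $\psi=\phi(y)\beta_q$ with $\phi\in\D(D)$; the right-hand side then reduces to $\int_D\phi\,d\mu_Z'$ since $\mu_Z=\M_Z\wedge\beta_q=m_n\otimes\mu_Z'$. To handle $Vol_Z(\Pi_s+iG)=([Z],\chi_s\chi_G\beta_q)$, whose defining form is not smooth in $y$, I would squeeze $\chi_G$ between test functions and use the boundary hypothesis to close the resulting gap.

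For the first claim, choose $\phi_\epsilon^-,\phi_\epsilon^+\in\D(D)$ with $\phi_\epsilon^-\le\chi_G\le\phi_\epsilon^+$, monotone convergence $\phi_\epsilon^-\nearrow\chi_G$ and $\phi_\epsilon^+\searrow\chi_{\bar G}$, and $\supp\phi_\epsilon^+\Subset D$; this is possible since $G\Subset D$. Because $[Z]\wedge\beta_q$ is a positive Borel measure, applying the displayed limit separately to $\phi_\epsilon^\pm$ sandwiches
$$\int\phi_\epsilon^-\,d\mu_Z'\ \le\ \liminf_{s\to\infty}(2s)^{-n}Vol_Z(\Pi_s+iG)\ \le\ \limsup_{s\to\infty}(2s)^{-n}Vol_Z(\Pi_s+iG)\ \le\ \int\phi_\epsilon^+\,d\mu_Z'.$$
Letting $\epsilon\to 0$, monotone convergence sends the outer terms to $\mu_Z'(G)$ and $\mu_Z'(\bar G)$; the hypothesis $\mu_Z'(\partial G)=0$ then collapses both bounds to the common value $\mu_Z'(G)$.

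For the equivalence, the $(\Leftarrow)$ direction is immediate: if $|Z|\cap T_G=\emptyset$, then $Vol_Z(\Pi_s+iG')=0$ for every open $G'\Subset G$, so the first part applied to an exhaustion $G_n\nearrow G$ chosen with $\mu_Z'(\partial G_n)=0$ (all but countably many translates of a fixed shape will do) forces $\mu_Z'(G)=0$. The $(\Rightarrow)$ direction I would prove by contrapositive: given $z_0=x_0+iy_0\in|Z|\cap T_G$, pick $r>0$ small so that $\overline{B(y_0,r)}\subset G$ and $\delta:=Vol_Z(B(x_0,r)+iB(y_0,r))>0$ by positivity of the trace measure at the regular part of $z_0$. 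Almost periodicity of $[Z]$ provides, for prescribed accuracy, a relatively dense set of translations $t\in\Rn$ at which $\T_t^*[Z]$ approximates $[Z]$, each such $t\in\Pi_s$ contributing roughly $\delta$ units of mass to a ball around $t+iy_0$. Counting $\gtrsim (s/L)^n$ well-separated $\epsilon$-almost-periods in $\Pi_s$, one obtains $(2s)^{-n}Vol_Z(\Pi_s+iB(y_0,r))\gtrsim\delta/(2L)^n$, and the first part (applied to a slight enlargement of $B(y_0,r)$ on whose boundary $\mu_Z'$ vanishes) gives $\mu_Z'(G)>0$.

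The main obstacle I expect is the last step: transferring the qualitative almost-periodicity of $[Z]$ (relative compactness of $\{\T_t^*[Z]\}_{t\in\Rn}$ in the distributional topology) into a genuine local-mass lower bound at each $\epsilon$-almost-period, with uniform constants as $t$ varies. This is where the machinery of almost periodic holomorphic chains from \cite{FRR2,FRR3} is essential; the preceding steps are formal consequences of the mean value current construction together with the limit identity.
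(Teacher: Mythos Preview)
The paper does not contain a proof of this theorem: it is stated with attribution to \cite{FRR2}, \cite{FRR3} and used as a black box, so there is no ``paper's own proof'' to compare your attempt against.

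That said, your outline is sound. The first claim is exactly the standard squeeze that one expects once the limit identity for smooth $\psi$ is in hand, and your use of the hypothesis $\mu_Z'(\partial G)=0$ to close the gap between $\mu_Z'(G)$ and $\mu_Z'(\bar G)$ is the right move. The $(\Leftarrow)$ direction of the equivalence is also fine as written.

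For the $(\Rightarrow)$ direction you chose to count $\epsilon$-almost-periods and sum up local volume contributions. This works, but there is a shorter route that avoids the bookkeeping you flag as the ``main obstacle''. Pick a nonnegative test form $\phi=\rho(x)\psi(y)\beta_q$ with $\rho,\psi\ge 0$, $\supp\psi\subset G$, and $\phi(z_0)>0$; then $\phi_Z(t)=(\T_t^*[Z],\phi)$ is a \emph{nonnegative} function in $\AP(\Rn,\C)$ with $\phi_Z(0)>0$. A nonnegative almost periodic function which is positive somewhere automatically has positive mean (uniform continuity plus relative density of almost periods), so $(\M_Z,\phi)=\M_{\phi_Z}>0$. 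Using $\mu_Z=\M_Z\wedge\beta_q=m_n\otimes\mu_Z'$ one reads off $\int\psi\,d\mu_Z'>0$, hence $\mu_Z'(G)>0$. This bypasses the need to turn distributional closeness of $\T_t^*[Z]$ to $[Z]$ into a uniform local-mass lower bound, which is precisely the step you were worried about.
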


\begin{rem}\label{rem:1} {\rm For $Z=Z_f$ with regular $f \in \HAP(T_D,\C^k)$, Theorem
\ref{theo:support} was proved in \cite{R2} (for $k=n$) and \cite{Ra}
($k<n$), without using the notion of almost periodic chain. The
current $\M_{Z_f}$ can be constructed as follows. The coefficients
$a_{IJ}$ of the current $\log|f|(dd^c\log|f|)^{k-1}$ are locally
integrable functions on $T_D$, almost periodic in the sense of
distributions: $(\T_{t}^*a_{IJ},\phi)\in \AP(T_D,\C)$ for any test
function $\phi\in\D(T_D)$. Therefore, they possess their mean values
$A_{IJ}=\M_{a_{IJ}}$, and the current $\M_{Z_f}=dd^c(\sum
A_{IJ}dz_I\wedge d\bar z_J)$.}
\end{rem}

\section{Amoebas}

Following \cite{F}, if $Z$ is an almost periodic holomorphic chain
in $T_D$, then its {\it amoeba } $\A_Z$ is the closure of the
projection of $|Z|$ to $D$:
$$\A_Z=\overline{\Imm |Z|},$$
where the map $\Imm:\:\Cn\to \Rn$ is defined by
$\Imm(z_1,\ldots,z_n)=(\Imm z_1,\ldots,\Imm z_n)$. When $Z=Z_f$ for
a regular mapping $f\in \HAP(T_D,\C^p)$, we write simply $\A_f$.

Our convexity result is stated in terms of the tube set
$T_{\A_Z}=\Rn+i\A_Z$.

\begin{thm}\label{theo:amch}
If $Z$ is an almost periodic holomorphic chain of dimension $q$ in a
tube domain $T_D\subseteq\Cn$, then $T_{\A_Z}=\supp \M_Z$, where
$\M_Z$ is the mean value current of the chain $Z$. Therefore,
$T_{\A_Z}$ is $q$-pseudoconcave in $T_D$. In particular, for any
regular mapping $f \in \HAP(T_D,\C^k)$, the set $T_{\A_f}$ is
$(n-k)$-pseudoconcave.
\end{thm}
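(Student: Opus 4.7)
The plan is to reduce the theorem to the set equality $T_{\A_Z}=\supp\M_Z$; once this is in hand, $q$-pseudoconcavity of $T_{\A_Z}$ in $T_D$ is immediate from Theorem~\ref{theo:FS} applied to the positive closed current $\M_Z$ of bidimension $(q,q)$. The final assertion is then automatic: for a regular $f\in\HAP(T_D,\C^k)$, the chain $Z_f$ is an almost periodic holomorphic chain of pure dimension $n-k$, and $\A_f=\A_{Z_f}$ by definition, so the general statement specialises to $(n-k)$-pseudoconcavity of $T_{\A_f}$.

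To prove the equality I would combine two ingredients. First, the translation invariance of $\M_Z$ along $\Rn$ — encoded in the product form $\M_{IJ}=m_n\otimes\M_{IJ}'$ and $\mu_Z=m_n\otimes\mu_Z'$ — together with the standard fact that the support of a positive current coincides with that of its trace measure, yields $\supp\M_Z=\Rn+i\supp\mu_Z'$. Second, Theorem~\ref{theo:support} supplies the density criterion $\mu_Z'(G)=0\iff|Z|\cap T_G=\emptyset$ for open $G\Subset D$. The target equality $T_{\A_Z}=\supp\M_Z$ therefore reduces to $\A_Z=\supp\mu_Z'$.

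Both inclusions in $\A_Z=\supp\mu_Z'$ are then direct. If $y_0\in D\setminus\A_Z$, I would choose a relatively compact neighborhood $G$ of $y_0$ disjoint from the closed set $\A_Z=\overline{\Imm|Z|}$; then $|Z|\cap T_G=\emptyset$, so $\mu_Z'(G)=0$ and $y_0\notin\supp\mu_Z'$. Conversely, if $y_0\in\A_Z$ then every neighborhood $G\Subset D$ of $y_0$ meets $\Imm|Z|$ (being open and containing a limit point of $\Imm|Z|$), hence $|Z|\cap T_G\neq\emptyset$, $\mu_Z'(G)>0$, and $y_0\in\supp\mu_Z'$.

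I do not expect any genuine obstacle: the argument is essentially an unpacking of the definitions combined with the two cited theorems, which is exactly the ``simple illustration'' advertised in the introduction. The only minor bookkeeping point is that, should the density criterion in Theorem~\ref{theo:support} be read as requiring $\mu_Z'(\partial G)=0$, one can always replace $G$ by a slightly shrunk neighborhood with this property, since the set of radii for which $\mu_Z'$ charges the boundary of a ball around $y_0$ is at most countable.
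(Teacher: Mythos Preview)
Your proposal is correct and follows exactly the paper's approach: deduce $\A_Z=\supp\mu_Z'$ from Theorem~\ref{theo:support}, rewrite this as $T_{\A_Z}=\supp(m_n\otimes\mu_Z')=\supp\M_Z$, and then invoke Theorem~\ref{theo:FS}. You have simply unpacked in detail the two lines into which the paper compresses the argument, including the (harmless) boundary-measure caveat.
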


\begin{proof} By Theorem \ref{theo:support}, $\A_Z=\supp \mu_Z'$,
which can be rewritten as $$T_{\A_Z}=\supp m_n\otimes
\mu_Z'=\supp\M_Z.$$ Since the current $\M_Z$ is positive and closed,
Theorem~\ref{theo:FS} implies the corresponding pseudoconcavity.
\end{proof}

\medskip

This covers the algebraic case as well by means of the map
$\EE:\:\Cn\to\Csn$, $\EE(z_1,\ldots,z_n)=(e^{-i
z_1},\ldots,e^{-iz_n})$. For a Laurent polynomial $P$, the
exponential sum $\EE^*P$ is periodic in $T_{\Rn}$, and its mean
value $\M_{\log |E^*P|}$ coincides with the Ronkin function $N_P$.
Furthermore, given an algebraic variety $V\subset\Csn$, its pullback
$\EE^*V$ is almost periodic (actually, periodic) in $\Cn$ and
$\A_{\EE^*V}=\A_V$, which gives

\begin{cor}\label{cor:1} The set $T_{\A_V^c}$ for an algebraic variety
$V\subset\Csn$ of pure codimension $k$ is $(n-k)$-pseudoconvex.
\end{cor}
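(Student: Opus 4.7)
The plan is to deduce the corollary directly from Theorem \ref{theo:amch} via the covering map $\EE$, treating $V\subset \Csn$ as a periodic (hence almost periodic) analytic set in $\Cn$.

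First I would check that $\EE^*V:=\EE^{-1}(V)$ is a holomorphic chain in $T_{\Rn}=\Cn$ of the same pure dimension $q=n-k$ as $V$. Since $\EE:\Cn\to\Csn$ is a local biholomorphism (a holomorphic covering), the preimage $\EE^{-1}(V)$ is a pure $q$-dimensional analytic subset of $\Cn$, and assigning to each irreducible component of $\EE^{-1}(V_j)$ the multiplicity of the corresponding component $V_j$ gives a well-defined holomorphic chain. This chain is invariant under the translations $z\mapsto z+2\pi e_j$ because $\EE$ is, so it is periodic and therefore belongs to $\AP$; in particular $(\T_t^*[\EE^*V],\phi)$ is a continuous periodic function of $t\in\Rn$ for every test form $\phi\in\D_{q,q}(\Cn)$, so $\EE^*V$ is an almost periodic holomorphic chain in $\Cn$ in the sense of Section~3.

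Next I would identify the two amoebas. Writing $w=\EE(z)$, one has $|w_j|=|e^{-iz_j}|=e^{\Imm z_j}$, so $\Log\circ\EE=\Imm$ on $\Cn$. Hence
\[
\A_{\EE^*V}=\overline{\Imm\,\EE^{-1}(V)}=\overline{\Log V}=\A_V,
\]
and consequently $T_{\A_{\EE^*V}}=T_{\A_V}$ as subsets of $T_{\Rn}=\Cn$.

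Finally I would apply Theorem \ref{theo:amch} with $D=\Rn$ to the chain $\EE^*V$ of dimension $q=n-k$: the set $T_{\A_V}=T_{\A_{\EE^*V}}$ is $(n-k)$-pseudoconcave in $\Cn$, which is precisely the assertion that its complement $T_{\A_V^c}$ is $(n-k)$-pseudoconvex. I do not foresee any serious obstacle; the only point that deserves care is the passage from $V$ to the chain structure on $\EE^*V$, i.e. checking that multiplicities lift coherently under the covering and that periodicity delivers almost periodicity in the exact sense required by Theorem \ref{theo:amch}.
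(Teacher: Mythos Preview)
Your proposal is correct and follows essentially the same route as the paper: pull back $V$ by the covering map $\EE$ to obtain a periodic (hence almost periodic) holomorphic chain in $\Cn$, identify $\A_{\EE^*V}=\A_V$ via $\Log\circ\EE=\Imm$, and apply Theorem~\ref{theo:amch}. The paper states these steps more tersely in the paragraph immediately preceding the corollary, but the argument is the same; your added remarks on lifting multiplicities and verifying almost periodicity just fill in details the paper leaves implicit.
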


\vskip1cm

Tek/Nat, University of Stavanger, 4036 Stavanger, Norway

\vskip0.1cm

{\sc E-mail}: alexander.rashkovskii@uis.no

\end{document}